\documentclass[a4paper,fleqn]{cas-sc}

\usepackage[authoryear,longnamesfirst]{natbib}

\def\tsc#1{\csdef{#1}{\textsc{\lowercase{#1}}\xspace}}
\tsc{WGM}
\tsc{QE}
\newtheorem{theorem}{Theorem}
\newtheorem{example}[theorem]{Example}
\newtheorem{lemma}[theorem]{Lemma}
\newdefinition{definition}{Definition}
\newdefinition{rmk}{Remark}
\newproof{pf}{Proof}
\usepackage[percent]{overpic}

\DeclareMathOperator{\ch}{ch}
\DeclareMathOperator{\aff}{aff}

\DeclareMathOperator{\inte}{int}

\DeclareMathOperator{\Rext}{Rext}

\DeclareMathOperator{\Rch}{Rch}
\DeclareMathOperator{\Extrem}{ext}

\newcommand{\Lat}{\mathscr{K}}
\newcommand{\RLat}{\mathscr{R}}
\newcommand{\Real}{\mathbb{R}}

\newcommand{\id}{\mathrm{id}}
\begin{document}
\let\WriteBookmarks\relax
\def\floatpagepagefraction{1}
\def\textpagefraction{.001}

% Short title
\shorttitle{Finite versus uncountable convex lattices from point configurations}  

% Short author
\shortauthors{Carles Cardó}  

% Main title of the paper
\title [mode = title]{Finite versus uncountable convex lattices from point configurations}

% Title footnote mark
% eg: \tnotemark[1]
%\tnotemark[1] 

% Title footnote 1.
% eg: \tnotetext[1]{Title footnote text}
%\tnotetext[1]{} 

% First author
%
% Options: Use if required
% eg: \author[1,3]{Author Name}[type=editor,
%       style=chinese,
%       auid=000,
%       bioid=1,
%       prefix=Sir,
%       orcid=0000-0000-0000-0000,
%       facebook=<facebook id>,
%       twitter=<twitter id>,
%       linkedin=<linkedin id>,
%       gplus=<gplus id>]

\author[1]{Carles Cardó}[orcid=0000-0003-3836-3635]

% Corresponding author indication
\cormark[1]

% Footnote of the first author
%\fnmark[1]

% Email id of the first author
\ead{ccardo@uic.es}

% URL of the first author
%\ead[url]{}

% Credit authorship
% eg: \credit{Conceptualization of this study, Methodology, Software}
%\credit{}

% Address/affiliation
\affiliation[1]{organization={Universitat Internacional de Catalunya},
            addressline={c/Josep Trueta s/n},  
            city={Sant Cugat del Vallès},
%          citysep={}, % Uncomment if no comma needed between city and postcode
            postcode={08195}, 
            state={Catalunya},
            country={Spain}}

% Corresponding author text
\cortext[1]{Corresponding author}

% Footnote text
%\fntext[1]{}

% For a title note without a number/mark
%\nonumnote{}

% Here goes the abstract
\begin{abstract}
We study the smallest convex lattice generated by a finite set of points. To analyze this structure, we introduce the notion of a point configuration, defined via the relative lattice. Under a suitable completeness condition, this lattice becomes a combinatorial counterpart of the convex lattice and is therefore easier to handle. We investigate the enumeration of these structures and prove that, while the number of relative lattices is always finite, the number of convex lattices is uncountable for $n \geq 6$.
\end{abstract}

% Use if graphical abstract is present
%\begin{graphicalabstract}
%\includegraphics{}
%\end{graphicalabstract}

% Research highlights
\begin{highlights}
\item Relative convex lattices provide a combinatorial model for point configurations.
\item Under a natural completeness condition, convex and relative lattices coincide.
\item There are only finitely many relative lattices on $n$ points.
\item In contrast, convex lattices are uncountable for $n \geq 6$.
\end{highlights}

% Keywords
% Each keyword is seperated by \sep
\begin{keywords}
Convex geometry \sep Lattice theory \sep Point configurations \sep Convex lattices \sep Relative lattices
\end{keywords}

\maketitle

%%%%%%%%%%%%%
\section {Introduction}\label{SectionIntro}
%%%%%%%%%%%%%
The \emph{convex hull} of a set $X \subseteq \mathbb{R}^d$, denoted by $\ch(X)$, is the smallest convex set containing $X$. 
The symbol $\ch$ is a \emph{closure operator}, and therefore the family of convex sets in $\mathbb{R}^d$, denoted by $\Lat$, is a complete lattice with operations $X\vee Y= \ch (X \cup Y)$ and $X \wedge Y=X \cap Y$; see \cite{brondsted2012introduction}.

A simple way to obtain a sublattice of $\Lat$ is to consider an initial finite set of points $X$, draw segments, triangles, and more generally, convex hulls, and then intersect them to obtain new points. With this process, we construct the smallest convex sublattice containing the points of $X$ as singletons, which we will denote by $\Lat(X)$. We will call these lattices \emph{point-generated lattices}. Since the intersection and the convex hull of a pair of bounded polytopes are bounded polytopes, by structural induction, the elements of $\Lat(X)$ are bounded polytopes. Notice, however, that $\Lat(X)$ does not always have a greatest element. Consider, for example, the infinite set $X=\{1/n \mid n \geq1 , n\in \mathbb{N}\}$. However, when $X$ is finite, then $\ch(X)$ is the greatest element. 

For most sets of points $X$, the generated lattice $\Lat(X)$ is infinite. For example, take the vertices of a regular pentagon. By drawing all possible segments, we obtain a smaller pentagon inside the original pentagon, and by repeating the operation, we obtain a third pentagon, and so on. We may ask for which initial sets of points in the plane the process stops. These configurations can be grouped into the following families:
\begin{enumerate}[(i)]
\item  \emph{Linear configurations}, $L_n$, consisting of a set of $n$ collinear points. 
\item \emph{Triangular configurations}, $T_n$, consisting of a set of $n$ collinear points and an extra non-collinear point.  
\item \emph{Diamond configurations}, $D_{p,q}$, given by two sets of collinear points, $A$ and $B$, such that $A\cap B=\{c\}$ and $|B|=3$. The point $c$ splits $A\setminus\{c\}$ into two subsets with $p$ and $q$ points.   
 \item \emph{Subdiamond configurations}, $I_{p,q}=D_{p,q}\setminus \{c\}$, where $c$ is the point of $D_{p,q}$ defined above.
 \item The \emph{sporadic configuration}, $S_6$, consisting of six points $\{a,b,c,a',b',c'\}$ with the only collinear subsets $\{a,a',c'\}$, $\{b,b',a'\}$ and $\{c,c',b'\}$.\end{enumerate}
 
%%%%%%%%%%%%%%%%%%%%%%
\begin{figure}
\centering
\begin{overpic}[scale=0.150]{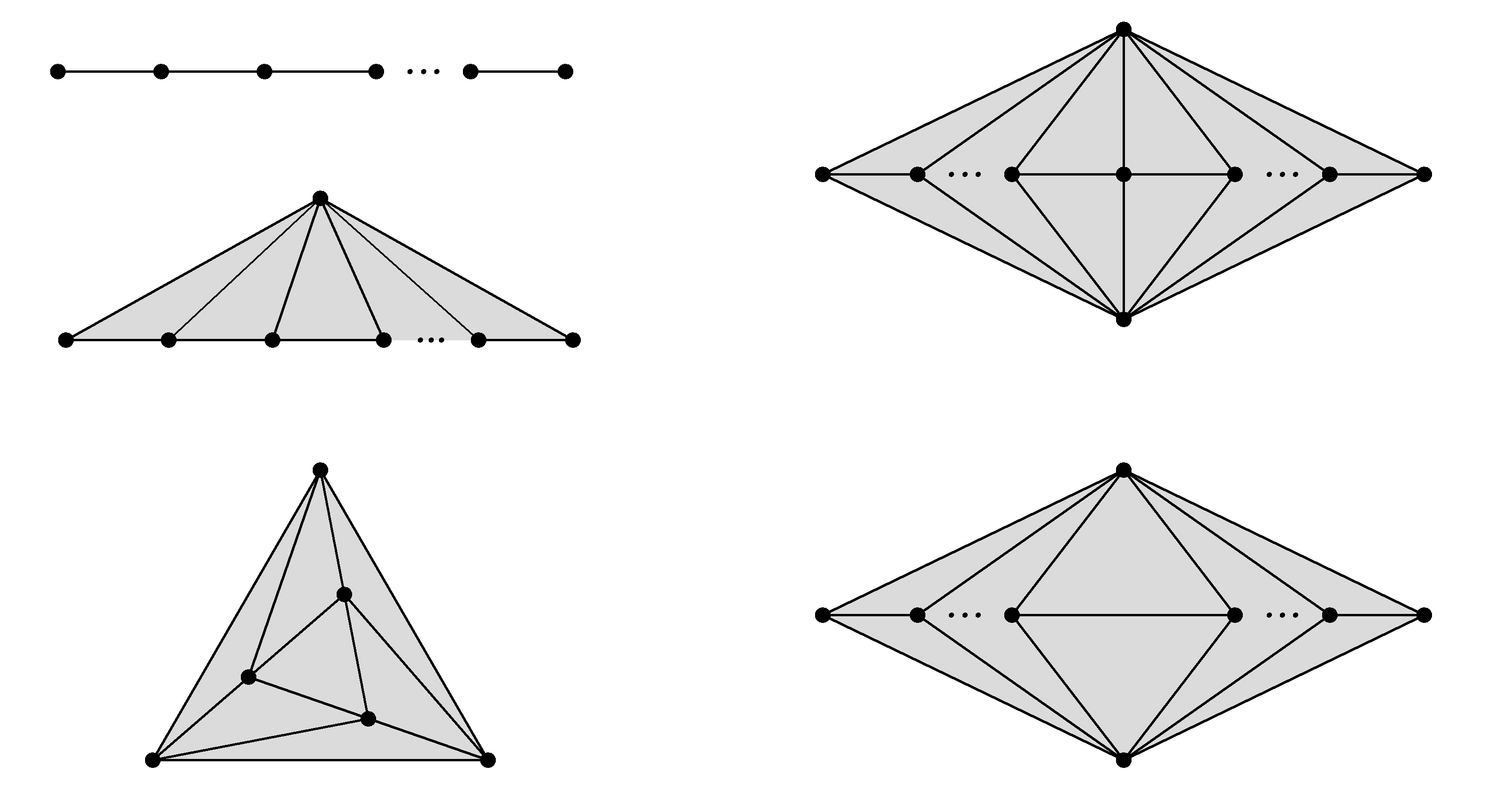}
 \put (7,53) {$L_n$} 

 \put (7,37) {$T_n$} 
 
 \put (10,12) {$S_6$}  
 
 \put (55,50) {$D_{p,q}$} 
 \put (76,40) {$c$}
 \put (54,41) {$\underbrace{\qquad \qquad \qquad }_{p}$}
 \put (81,41) {$\underbrace{\qquad\qquad \qquad }_{q}$}
    
 \put (55,20) {$I_{p,q}$} 

\end{overpic}
\caption{The unique planar configurations $X$ for which $\Lat(X)$ is finite.}\label{FigDefClasses}
\end{figure}
%%%%%%%%%%%%%%%%%%%%%% 
See Figure~\ref{FigDefClasses}. We can even summarize it further: a configuration gives rise to a finite convex lattice if and only if it is either a subconfiguration of the diamond configuration, or is the sporadic configuration $S_6$.  See \cite{Eppstein2026} and \cite{Grune2008} for proofs of this result.

There are several problems and questions related to this one. For example, one can consider the problem of determining which configurations in $\mathbb{R}^3$ generate a finite convex lattice. To the best of our knowledge, this question remains open. Another example is the following: in which regions of the plane the set of points generated by the lattice is dense? See \cite{Klein2006} for proofs, and see \cite{Caceres2007, Ismalescu2004, Cooper2010}, among others, for other variants of the problem.

Some configurations, such as those mentioned above, are so simple that they can be described in terms of the collinearity of their points. However, to study more general configurations in a systematic way, we need to define configurations in abstract terms, not just as a set of points. This set needs to be given a structure that takes into account certain invariant properties. This depends, of course, on the problem we are investigating. In the case of this article, we are not interested in metric properties, but rather in incidence, collinearity, coplanarity, and related properties. In this sense, we will see that the \emph{relative lattice} $\RLat(X)$ of a set $X$ of points allows us to define the notion of ``configuration'' in an intrinsic way, appropriate for problems such as the finiteness of $\Lat(X)$. We explore this concept in the next Section~\ref{SectionRela}.

When $X$ is finite, the relative lattice $\RLat(X)$ is also finite. Then it becomes a more manageable structure, but it does not reflect the whole structure of $\Lat(X)$, and some information is lost. However, we will see in Section~\ref{SectionComplete} that, under a suitable \emph{completeness} condition, the lattices $\RLat(X)$ and $\Lat(X)$ are isomorphic, in which case $\RLat(X)$ becomes the combinatorial version of $\Lat(X)$. This constitutes our first structural result.

The second goal of the article, presented in Section~\ref{SectionNonIso}, concerns the enumeration of these two structures up to isomorphism. In the case of relative lattices, the number grows exponentially but is always finite for a fixed number of points. In the second case of convex lattices, we show that, surprisingly, for $n\geq 6$ the number of lattices is infinite and uncountable. This is the second main result of this paper.

Let us fix the minimal notation for the rest of the article. When convenient, we will abbreviate the singletons of a lattice as $a=\{a\}$ and the segments as $ab=\{a\}\vee \{b\}=a\vee b$. Given three non-collinear points $a,b,c$ in the plane, $\triangle abc$ denotes the triangle with vertices $a,b,c$, that is, $\triangle abc=a\vee b \vee c$.

To denote that three distinct points $a,b,c$ are collinear and that $b$ lies between $a$ and $c$, we will write $a{-}b{-}c$.

Given $X \subseteq \mathbb{R}^d$, we denote by $\inte (X)$ the interior of a set (more precisely, the interior relative to its affine span). By $\Extrem(X)$ we denote the set of extreme points of $X$.

The remaining terms will be recalled or introduced throughout the article. However, for elementary concepts in algebra, we will refer to \cite{Burris2012}; for questions in order theory to \cite{davey2002introduction}; and for questions in convex geometry, we will follow \cite{brondsted2012introduction}.

%%%%%%%%%%%%%%%%%%%%%%%%%
%%%%%%%%%%%%%%%%%%%%%%%%%
\section{Relative lattices and configurations} \label{SectionRela}
%%%%%%%%%%%%%%%%%%%%%%%%%
%%%%%%%%%%%%%%%%%%%%%%%%%

To provide a suitable notion of ``configuration'' of points, we introduce \emph{relative convex lattices}. The term is taken from a survey on convex sets by \cite{bergman2005lattices} in a broader context, whereas our interest is more combinatorial.

\begin{definition}
Given a subset $A \subseteq X \subseteq \Real^d$, the \emph{relative convex hull of $A$ with respect to $X$} is the set
$$\Rch_X(A)=\ch(A) \cap X.$$
When the ambient set $X$ is clear from the context, we simply write $\Rch(A)$. A subset $A \subseteq X$ is said to be \emph{relatively convex} if $\Rch(A)=A$. Since $\Rch(\cdot)$ is a closure operator, we can define the \emph{relative lattice of $X$} as the set of relatively convex sets,
$$ \RLat(X)=\{ \Rch_X(Y) \mid Y\subseteq X\},$$
with operations $A\vee_R B=\Rch_X(A\cup B)$ and $A\wedge_R B=A \cap B$.

We say that two sets $X$ and $Y$ are \emph{equivalent}, and write $X\equiv Y$, if there exists a lattice isomorphism between $\RLat(X)$ and $\RLat(Y)$. We will use the term \emph{configuration} for such an equivalence class.
 By abuse of notation, we will identify a configuration with any of its representatives.
\end{definition}

\begin{example}
The linear configuration $L_n$, as defined at the beginning, Section~\ref{SectionIntro}, is a configuration in the abstract sense. Any set of $n$ collinear points yields the same relative lattice up to isomorphism. The same occurs for the other configurations $T_n, D_{p,q}, I_{p,q}, S_6$: their definitions are given by means of collinearity and incidence. In fact, these are pairwise non-equivalent  (except for a few degenerate cases, such as $L_2$ and $T_1$, or $T_3$ and $D_{0,1}$).
\end{example}

\begin{example}
Let $T=\{a,b,c,d\}$ be the vertices of a triangle $\triangle abc$ together with a point $d$ in its interior, and let $S$ be the set of vertices of a square. Then, $T$ and $S$ are not equivalent, since $d \in \Rch(\{a,b,c\})$, while no triangle determined by points of $S$ contains another point of $S$.

We can consider other relations between sets beyond equivalence. For example, let three collinear points $L_3=\{x,y,z\}$ with $x{-}y{-}z$. There exists a surjective lattice morphism $f:\RLat(T) \longrightarrow \RLat(L_3)$ given by
\[
f(\{a\})=\{x\},\quad f(\{b\})=f(\{c\})=\{z\},\quad f(\{d\})=\{y\}.
\]
\end{example}

Contrary to geometric intuition, the dimension of a configuration (defined as the dimension of its affine hull) is not invariant under equivalence. For instance, a tetrahedron is equivalent to a convex quadrilateral. More generally, it is straightforward to see that any set of $n$ points in convex position in $\Real^d$ is equivalent to a regular $n$-gon in some plane of $\Real^d$.

Fortunately, there are many effective invariants. Of course, equivalent configurations have the same cardinality. Two useful invariants are the following. First, define the set of \emph{relative extreme points} of $X$ as
\[ \Rext(X)=\{ x \in X \mid X\setminus \{x\} \text{ is relatively convex} \}. \]
If $X\equiv Y$, then $\Rext(X)\equiv \Rext(Y)$ as configurations.
The second invariant is, in fact, a family of invariants. Let $Z$ be a configuration. We define
\[ \#_Z(X)=\text{the number of subconfigurations of $X$ equivalent to $Z$}. \]

Invariants help to separate configurations, but do not indicate how to find all of them. We observe, however, that if $f:\RLat(X) \longrightarrow \RLat(Y)$ is an isomorphism, and $a\in X$, then $X\setminus\{a\} \equiv Y\setminus \{ f(\{a\})\}$. Thus, configurations of size $n+1$ can be obtained from those of size $n$ by adding one point. Depending on the position of the added point, different configurations arise. However, this process does not ensure that we capture all configurations of a fixed order. Despite this, it can be shown that the number of configurations with a fixed number of points is finite; see Section~\ref{SectionNonIso}.

What we can do is list the planar configurations of small orders that we know to be pairwise non-equivalent. See Figure~\ref{FigPlanarConf} for the designations.

There is a unique configuration of size one and of size two, denoted $L_1$ and $L_2$. From $L_2$ we get two configurations of size three. When we add a point on the line of $L_2$, we get  $L_3$ if the point is on the line, and $T_2$ if it is not. These are not equivalent since $|\Rext(L_3)|=2$, while  $|\Rext(T_2)|=3$. 

For configurations of size four, there are four configurations: $L_4$, $T_3, I_{0,2}$, and $I_{1,1}$. The following table shows that for each pair of configurations, there is at least one invariant that separates them.    

\begin{center}
\begin{tabular}{ c | c c c c }
 & $L_4$ & $T_3$ & $I_{0,2}$ & $I_{1,1}$ \\ 
 \hline
 $\#_{L_1}(X)$ &4 &4& 4& 4\\  
 $\#_{L_2}(X)$ &6 &6& 6& 6\\
 $\#_{L_3}(X)$ &4 &1& 0& 0\\
 $\#_{T_2}(X)$ &0 &3& 4& 4\\
 $|\Rext(X)|$ &2 &3& 3& 4\\
\end{tabular}
\end{center}

Using the same invariants, we have been able to find twelve pairwise non-equivalent configurations of order five. We conjecture that this list is complete, although we do not have a formal proof.

Let us note that the invariants $\#_Z( \cdot)$ satisfy the equalities:
$$\#_{L_3}(X)+ \#_{T_2}(X) ={ |X| \choose 3},$$
for $|X|\geq 3$, and if $|X|\geq 4$,
$$\#_{L_4}(X)+ \#_{T_3}(X) +\#_{I_{0,2}}(X)+ \#_{I_{1,1}}(X) ={ |X| \choose 4}.$$
More generally, when $|X|\geq k$,
$$ \sum_{Z} \#_Z(X)= { |X| \choose k},$$
where the sum runs over all the configurations $Z$ of size $k$.  The proof is straightforward, since any $k$-element subset of $X$ must be equivalent to some $k$-point configuration.

%%%%%%%%%%%%%%%%%%%%%%
\begin{figure}
\centering
\begin{overpic}[scale=0.18]{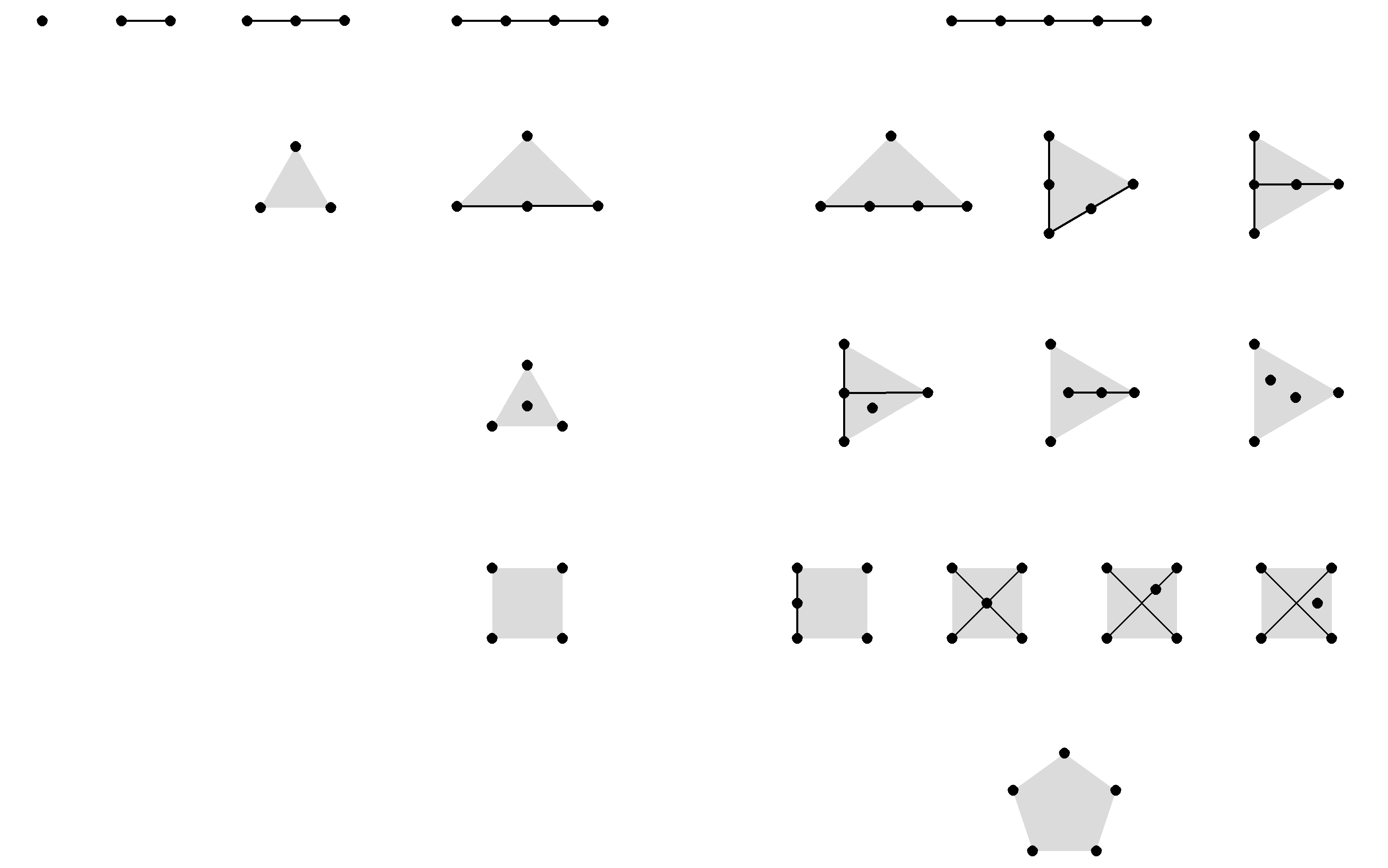}
 \put (2,58) {$L_1$} 
 \put (9,58) {$L_2$} 
 \put (20,58) {$L_3$} 
 \put (37,58) {$L_4$} 
 \put (75,58) {$L_5$} 

  \put (17,53) {$T_2$} 
  \put (33,53) {$T_3$} 
  \put (66,53) {$T_4$} 
  \put (78,53) {$V_5$} 
  \put (92,53) {$D_{0,2}$}

  \put (35.5,38.5) {$I_{0,2}$} 
  \put (63,38) {$R$} 
  \put (78,38) {$I_{0,3}$} 
  \put (93,38) {$G$} 

  \put (36,24) {$I_{1,1}$}

  \put (58.5,24) {$R'$} 
  \put (69,24) {$D_{1,1}$} 
  \put (80,24) {$I_{1,2}$} 
  \put (92,24) {$G'$} 
     
  \put (70.5,8) {$P_5$} 
  
\end{overpic}
\caption{Planar configurations with 1, 2, 3, 4, and 5 points and their designations. See  Section~\ref{SectionIntro} for the labels $L_{n}$, $T_n$, $D_{p,q}$, and $I_{p,q}$. The remaining designations are arbitrary. Shadows indicate the convex hulls. Lines indicate either collinear points or forbidden positions for additional points. }\label{FigPlanarConf}
\end{figure}
%%%%%%%%%%%%%%%%%%%%%%

%%%%%%%%%%%%%%%%%%%%%%%%%%
\section{Complete configurations} \label{SectionComplete}
%%%%%%%%%%%%%%%%%%%%%%%%%%

\begin{definition} Given a configuration $X$, its \emph{completion} is the set of singleton elements of $\Lat(X)$, i.e., all points that can be generated by lattice operations. It is denoted by  $\overline{X}=\{ x \in \mathbb{R}^d \mid \{x\} \in \Lat(X) \}$. The set $X$ is said to be \emph{complete} if $\overline{X}=X$. 
\end{definition}
Notice that $\overline{X}$ is always complete and that $\Lat(X)=\Lat(\overline{X})$. 
Completeness means that no new points arise from lattice operations.

\begin{example} If we add the center point to a convex quadrilateral $I_{1,1}$, we complete the configuration $\overline{I}_{1,1}\equiv D_{1,1}$. We have $\Lat(I_{1,1})= \Lat(\overline{I}_{1,1})$. 
\end{example}

We will prove that if $X$ is complete, $\RLat(X)$ is the combinatorial version of $\Lat(X)$, or more precisely, they are isomorphic. We first need a lemma.
\begin{lemma} \label{LemmaExtreme} Let $X$ be complete. For any $C\in \Lat(X)$, $\Extrem(C) \subseteq X$. 
\end{lemma}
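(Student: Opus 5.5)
The natural route is structural induction on how elements of $\Lat(X)$ are built. Let $\mathcal{S}$ be the family of convex sets $C \in \Lat(X)$ with $\Extrem(C)\subseteq X$. We show $\mathcal{S}$ contains the generators and is closed under both lattice operations. Since $\Lat(X)$ is the smallest sublattice containing the singletons, this gives $\mathcal{S}=\Lat(X)$. Recall from the introduction that every $C\in\Lat(X)$ is a bounded polytope, so $C=\ch(\Extrem(C))$ and $\Extrem(C)$ is finite. If $X$ is infinite, the lattice is still generated by finite joins and meets, so each element is a finite polytope and induction on term depth is legitimate. The base case is immediate: $\Extrem(\{x\})=\{x\}\subseteq X$.

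\emph{Join.} If $C,D\in\mathcal{S}$, then $C\vee D=\ch(C\cup D)=\ch(\Extrem(C)\cup\Extrem(D))$. Every extreme point of the convex hull of a finite set belongs to that set, so $\Extrem(C\vee D)\subseteq \Extrem(C)\cup\Extrem(D)\subseteq X$.

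\emph{Meet.} This is the main step and is where completeness enters. Let $v$ be an extreme point of $C\cap D$. We want to show $\{v\}\in\Lat(X)$; completeness then gives $v\in\overline{X}=X$. The idea is to cut $C$ and $D$ down to faces. Let $F_C$ be the smallest face of $C$ containing $v$, so that $v\in\inte(F_C)$, and define $F_D$ for $D$ likewise.

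\emph{Claim:} $F_C\cap F_D=\{v\}$.
\begin{itemize}
\item Suppose $w\neq v$ lies in $F_C\cap F_D$.
\item Since $v$ is in the relative interiors of both faces, $v\pm\varepsilon(w-v)$ lies in $F_C\cap F_D\subseteq C\cap D$ for small $\varepsilon>0$.
\item Then $v$ is the midpoint of a segment in $C\cap D$, contradicting that $v$ is extreme.
\end{itemize}

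It remains to see that $F_C, F_D\in\Lat(X)$. A face of a polytope is the convex hull of the vertices lying on it. By the induction hypothesis these vertices are points of $X$, so $F_C=\bigvee\{x : x\in\Extrem(C)\cap F_C\}$ is a finite join of singletons and lies in $\Lat(X)$; the same holds for $F_D$. Hence $\{v\}=F_C\wedge F_D\in\Lat(X)$, so $v\in X$, and $C\wedge D\in\mathcal{S}$.

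The delicate point is this meet step. One needs the facts that the minimal face containing $v$ has $v$ in its relative interior, and that faces of $C$ are joins of singletons of $X$. This is exactly where the inductive hypothesis $\Extrem(C)\subseteq X$ is used essentially rather than formally. The case where $C\cap D$ is empty needs a convention: either the empty set is excluded from $\Lat(X)$, or it trivially has no extreme points.
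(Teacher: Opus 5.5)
Your proof is correct and follows the same structural induction as the paper: the base case comes from completeness, the join step uses $\Extrem(A\vee B)\subseteq\Extrem(A)\cup\Extrem(B)$, and the meet step goes through faces whose vertices lie in $X$. The one difference is that the paper simply cites computational geometry for ``extreme points of $A\wedge B$ are intersections of faces of $A$ and $B$'', whereas you prove it directly: taking the minimal faces $F_C, F_D$ containing $v$ gives $F_C\cap F_D=\{v\}$, so $\{v\}=F_C\wedge F_D\in\Lat(X)$.
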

\begin{pf} Let the singleton $\{a\} \in \Lat(X)$. Since $X$ is complete and $\Extrem(\{a\})=\{a\}$, $a\in X$. Suppose that $A,B \in \Lat(X)$ are such that $\Extrem(A), \Extrem(B) \subseteq X$. If we prove that $\Extrem(A \wedge B)\subseteq X$, and $\Extrem(A \vee B)\subseteq X$, then, by structural induction, for any $C\in \Lat(X)$, $\Extrem(C) \subseteq X$. 

For the join operation, we have directly that 
$$\Extrem(A \vee B) \subseteq \Extrem (A) \cup \Extrem (B) \subseteq X \cup X=X.$$
For the meet operation, recall that $A$ and $B$ are bounded polytopes, and the intersection of bounded polytopes is a bounded polytope. There are several algorithms in computational geometry to obtain the intersection of two polytopes. However, we only need to know that the extreme points of $A \wedge B$ are obtained from intersections of faces of $A$ and $B$;  see \cite{Preparata2012}. Since the faces of $A$ and $B$ are convex hulls of their extreme points, 
$$\Extrem(A\wedge B) \subseteq \Lat( \Extrem(A)\cup \Extrem(B) )\subseteq \Lat( X \cup X)=\Lat( X).$$
By completeness of $X$, the singleton elements of $\Lat(X)$ are precisely the elements of $X$. Then, $\Extrem(A\wedge B) \subseteq X.$
\end{pf}

\begin{theorem} \label{PropositionIso} If $X$ is complete, $\Lat(X) \cong \RLat(X)$.
\end{theorem}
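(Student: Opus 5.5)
The plan is to define $\varphi:\Lat(X)\to\RLat(X)$ by $\varphi(C)=C\cap X$ and $\psi:\RLat(X)\to\Lat(X)$ by $\psi(A)=\ch(A)$. We then show that the two maps are well defined and mutually inverse order-preserving bijections. Since both $\Lat(X)$ and $\RLat(X)$ are lattices, an order isomorphism between them is automatically a lattice isomorphism. Throughout we take $X$ finite, as in the setting of the paper, so every $C\in\Lat(X)$ is a bounded polytope and $\ch(X)$ is the top element.

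\textbf{Well-definedness.} For $\varphi$: the set $C\cap X$ is relatively convex, because $\Rch(C\cap X)=\ch(C\cap X)\cap X\subseteq C\cap X$, using that $C$ is convex. For $\psi$: we must check that $\ch(A)\in\Lat(X)$ for every $A\in\RLat(X)$. Since $A\subseteq X$ is finite and every singleton $\{a\}$ with $a\in X$ lies in $\Lat(X)$, we have $\ch(A)=\bigvee_{a\in A}\{a\}\in\Lat(X)$. The empty set needs a separate convention: if $\emptyset\in\Lat(X)$ (for instance as a meet of disjoint singletons), it corresponds to $\emptyset\in\RLat(X)$. If $\emptyset\notin\Lat(X)$, which happens when $|X|=1$, then $\RLat(X)$ should be taken without the empty set, or we simply note this degenerate case. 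Both maps are clearly monotone.

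\textbf{Inverse compositions.} First, $\varphi\psi(A)=\ch(A)\cap X=\Rch(A)=A$ because $A$ is relatively convex. Second, $\psi\varphi(C)=\ch(C\cap X)$, and we need this to equal $C$. This is where Lemma~\ref{LemmaExtreme} and completeness enter. Since $C$ is a bounded polytope, it is compact, so by Minkowski/Krein--Milman $C=\ch(\Extrem(C))$. By Lemma~\ref{LemmaExtreme}, $\Extrem(C)\subseteq X$, and trivially $\Extrem(C)\subseteq C$. Hence $\Extrem(C)\subseteq C\cap X$, which gives
$$C=\ch(\Extrem(C))\subseteq \ch(C\cap X)\subseteq C.$$
So $\psi\varphi=\id$, and the two maps are mutually inverse monotone bijections, i.e.\ an order isomorphism and therefore a lattice isomorphism.

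As a sanity check, the operations can be verified directly. For meets, $\varphi(C\cap D)=\varphi(C)\cap\varphi(D)$ is immediate. For joins, $\psi(A\vee_R B)=\ch(\ch(A\cup B)\cap X)=\ch(A\cup B)=\psi(A)\vee\psi(B)$, using the same extreme-point argument or simply $A\cup B\subseteq \ch(A\cup B)\cap X\subseteq\ch(A\cup B)$.

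The only real content is the identity $\ch(C\cap X)=C$, and that rests entirely on Lemma~\ref{LemmaExtreme}, which we take as given. The remaining care is bookkeeping about the empty set and finiteness of $X$, which makes $\psi$ land in $\Lat(X)$.
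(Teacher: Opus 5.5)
Your proof is correct and follows the paper's argument. It uses the same pair of maps $C\mapsto C\cap X$ and $A\mapsto \ch(A)$, and obtains the key identity $\ch(C\cap X)=C$ from Lemma~\ref{LemmaExtreme} and Minkowski's theorem exactly as the paper does. The only differences are that you derive join-preservation from the order isomorphism instead of the paper's explicit chain of equalities, and that you spell out the well-definedness, finiteness and empty-set bookkeeping (including the degenerate case $|X|=1$), which the paper leaves implicit.
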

\begin{pf} Let $\Phi:  \Lat(X) \longrightarrow \RLat(X)$, defined by $\Phi(A)=A\cap X$, and let  $\ch: \RLat(X) \longrightarrow \Lat(X)$ be the convex hull operator. We prove that $\Phi$ and $\ch$ are inverses of each other. 
 
If $A \in \RLat(X)$, then $A$ is relatively convex, $\Rch(A)=A$, and then  $\Phi(\ch(A))=\ch(A)\cap X=\Rch(A)=A$. Hence, $\Phi \circ \ch= \id_{\RLat(X)}$.

Let $B \in \Lat(X)$. We want to prove that $B=\ch(B\cap X)$. Consider first the inclusion $(\subseteq)$.  By Lemma~\ref{LemmaExtreme}, $\Extrem (B) \subseteq X$. Clearly $\Extrem (B) \subseteq B$, and then $\Extrem (B) \subseteq B\cap X$. Since $\ch$ is monotone, $\ch(\Extrem B) \subseteq \ch(B \cap X)$, that is, $B\subseteq \ch(B\cap X)$, where we have used, by Minkowski's theorem, that a bounded polytope is the convex hull of its extreme points, see \cite{brondsted2012introduction}.
 
 Now we prove the inclusion $(\supseteq)$. We have that $\ch(B \cap X) \subseteq \ch(B)$. Since $B$ is a convex set, $\ch(B)=B$, and then $\ch(B \cap X) \subseteq B$. Hence, $\ch ( \Phi (B))=\ch ( B \cap X)=B$. Thus, $\ch \circ \, \Phi=\id_{\Lat(X)}$. 

Next, we check that $\Phi$ is a morphism of lattices. For the meet operation, it is trivial: 
\begin{align*}
\Phi(A\wedge B)&=\Phi(A \cap B) =A\cap B \cap X \\
&=A \cap X \cap B \cap X=\Phi(A)\cap \Phi(B)\\
&=\Phi(A)\wedge_R \Phi(B). \end{align*}
For the join operation, $\Phi(A \vee B)=\ch(A\cup B)\cap X$. Notice that we cannot write $\ch(A\cup B)\cap X= \Rch(A\cup B)$, because $A$ and $B$ are not subsets of $X$, but of $\mathbb{R}^d$. However, as shown above $\ch ( B \cap X) = B$. Therefore,
\begin{align*}
\Phi(A \vee B) &= \ch(A\cup B)\cap X\\
 &= \ch((A\cup B)\cap X) \cap X\\
&=\ch((A\cap X)\cup (B\cap X)) \cap X\\
&=\Rch((A\cap X)\cup (B\cap X))\\
&=\Rch(A\cap X)\vee_R \Rch(B\cap X)\\
&=(\ch(A\cap X) \cap X)\vee_R (\ch(B\cap X)\cap X)\\
&=(A \cap X)\vee_R (B\cap X)=\Phi(A)\vee_R \Phi(B).
\end{align*}
\end{pf}

%%%%%%%%%%%%%%%%%%%%%%%%%%%%%
\section{Number of non-isomorphic relative and convex lattices} \label{SectionNonIso}
%%%%%%%%%%%%%%%%%%%%%%%%%%%%%

We now consider the lattice enumeration problem in the planar case. Let $R_n$ denote the number of non-isomorphic relative lattices, that is, non-equivalent planar configurations, and $K_n$ denote the number of non-isomorphic convex lattices.
In Section~\ref{SectionRela} we saw that for every $n\leq 5$, the values $R_n$ satisfy the following lower bounds
$$1, \;1, \:2, \:4, \:12.$$
These numbers are conjectured to be exact. For $n\geq 6$, we cannot provide a close estimate. However, we can easily show that, on the one hand, the growth is at least exponential, and on the other hand, that the number $R_n$ is finite.

\begin{theorem} The number of non-equivalent planar configurations with $n$ points
$$1.58^{n-6}\leq R_n \leq 2^{n 2^n}.$$
\end{theorem}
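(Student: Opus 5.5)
The two bounds are independent; I will prove the upper bound by a counting argument on closure operators and the lower bound by an explicit family of configurations.

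\textbf{Upper bound.} Fix $|X|=n$ and label the points $1,\dots,n$. The relative lattice $\RLat(X)$ is a family of subsets of an $n$-element set, closed under intersection and containing $X$. It is determined by the closure operator $\Rch_X$, a map from the $2^n$ subsets of $X$ to subsets of $X$. The number of such maps is at most $(2^n)^{2^n}=2^{n2^n}$. Two equivalent configurations may have different closure maps, but every isomorphism class of $\RLat(X)$ is realized by at least one labelled map. Hence $R_n\le 2^{n2^n}$. A subtlety: a lattice isomorphism between $\RLat(X)$ and $\RLat(Y)$ need not a priori be induced by a bijection of points. We do not need that, since we only count how many distinct abstract lattices can arise. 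Each such lattice is isomorphic to one of the at most $2^{n2^n}$ labelled set systems, so the count of isomorphism classes is bounded by the count of labelled systems.

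\textbf{Lower bound.} I would exhibit an exponential family of pairwise non-equivalent configurations, using the invariant $|\Rext(\cdot)|$ together with the counting invariants $\#_{L_3}$, or more robustly the multiset of line sizes. The key point is that collinearity is encoded in the lattice: for three points, $b\in\Rch(\{a,c\})$ iff $a{-}b{-}c$. So the family of maximal collinear subsets, with their betweenness structure, is an invariant.

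Construction: take disjoint lines $\ell_1,\dots,\ell_k$ in general position, with $m_i\ge 1$ points placed on each, avoiding all extra collinearities and intersection points. The multiset $\{m_i\}$ of sizes of maximal collinear sets with at least $3$ points is then an invariant. With $n$ points we get at least as many configurations as partitions of $n$ into parts, which grows like $e^{c\sqrt n}$. That is subexponential, so it is not enough.

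To obtain $1.58^{n}$, I would instead use ordered data. Place points on a single line $\ell$ in order $p_1,\dots,p_m$, and attach to each $p_i$ either nothing or an extra point off the line forming a specified small collinear triple. This produces a binary word that is recoverable from the lattice up to reversal. Since $\ell$ is the unique long line, reading its betweenness order recovers the word up to reversal, giving about $2^{m}/2$ classes with $n\approx m+(\text{number of attachments})$. Choosing the encoding so that each ``1'' costs two points and each ``0'' costs one gives Fibonacci-type growth, $\varphi^{n}\approx 1.618^n$. This comfortably exceeds $1.58^{n-6}$, and the $-6$ absorbs small cases and the reversal factor.

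\textbf{Main obstacle.} The hardest step is the realizability and rigidity of the lower-bound family. I must ensure that the attached points create no unintended collinearities or containments, which is handled by generic perturbation. I must also show that any lattice isomorphism maps the distinguished line to itself preserving order up to reversal, so that the words are genuine invariants. I would argue this via $\Rext$ and the maximal-chain structure of collinear subsets.
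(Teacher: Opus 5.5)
Your upper bound is the same count of maps $2^X\to 2^X$ as in the paper, and it is fine. The lower bound has a genuine gap at its central step. The gadget that makes ``$q_i$ is attached at $p_i$'' visible to $\RLat(X)$ is never specified, and with your accounting (a 1 costs only $p_i$ plus one new point $q_i$) it cannot be a collinear triple. A third point on the line $p_iq_i$ cannot lie on $\ell$, since that forces $q_i\in\ell$. So it would have to be another attached $q_j$, which is not a local gadget and does not exist at all when the word has a single 1.

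If instead $q_i$ is merely placed off $\ell$ near $p_i$, the lattice forgets the position. With one attached point you always get $T_m$. More generally, suppose the set $Q$ of attached points lies on one side of $\ell$ and consists of vertices of $\ch(X)$. Then $\ell$ is a supporting line, $\Rch_X(A)=(A\cap Q)\cup\Rch_X(A\cap\ell)$, and $\RLat(X)\cong 2^k\times\RLat(L_m)$ whatever the word is. Generic perturbation cannot repair this: it only removes incidences, and incidences are exactly what must carry the information.

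The missing idea is a common anchor. Add one apex $o\notin\ell$ and put each $q_i$ at a generic point of the open segment $op_i$. Then the only collinear sets with at least three points are $\ell$ and the triples $o{-}q_i{-}p_i$. The triples $oq_iq_j$ and $oq_ip_j$ with $j\neq i$ are impossible, because distinct lines through $o$ meet only at $o$. The triples $q_iq_jq_k$ and $q_iq_jp_k$ are excluded by genericity.

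Rigidity is then easy and needs neither $\Rext$ nor chains. The atoms of $\RLat(X)$ are the singletons, so every isomorphism is induced by a bijection of points preserving the relation $b\in\Rch(\{a,c\})$, i.e.\ betweenness. (So the ``subtlety'' you raise does not arise.) For $m\ge 4$ the isomorphism therefore carries $\ell$, the only line with four or more points, to its counterpart. It respects the order on $\ell$ up to reversal and the set of points of $\ell$ lying on a second line, and that set is your word. With $n=m+k+1$ you get at least $F_n/2\ge\varphi^{n-2}/2\ge 1.58^{n-6}$ classes for $n\ge 8$, where $F_n$ is the Fibonacci number; smaller $n$ are trivial.

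Repaired this way, your route genuinely differs from the paper's. The paper stacks $\lfloor n/3\rfloor$ nested triangles and chooses, for each consecutive pair, one of four collinearity patterns (zero to three collinear triples, the last being $S_6$). That gives $4^{\lfloor n/3\rfloor-1}\ge(4^{1/3})^{n-6}$, which is where the constant $1.58\approx 4^{1/3}$ comes from. Your fan yields a slightly better base, and its invariant is read off a single line. The paper's argument instead needs the nested-layer structure to be recoverable from the lattice.
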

\begin{pf} Since $\Rch_X$ is a closure operator on $2^X$, the number of non-isomorphic lattices is bounded above by the number of closure operators, which in turn is bounded above by the number of all maps $2^X \to 2^X$, namely $(2^n)^{2^n}$.

For the lower bound, consider a sequence of nested triangles $(\triangle_k)_{k\geq 1}$,
$$\triangle _k \subseteq \inte (\triangle _{k+1}).$$ 
With $n$ points, we can form a sequence of $\lfloor n/3 \rfloor$ triangles. Each pair of consecutive triangles can realize one of four distinct collinearity patterns between their vertices: 
\begin{enumerate}
 \item No three points are collinear in $\triangle_k \cup \triangle_{k+1}$. 
 \item There are exactly three collinear points. 
 \item There are two sets of collinear points. 
 \item There are three sets of collinear points, that is, $\triangle_k \cup \triangle_{k+1}$ is the sporadic configuration $S_6$. 
 \end{enumerate}
For each inclusion of consecutive triangles, we can decide how to align the vertices. If two such constructions of triangles yield equivalent configurations, then the sets of collinear points for each pair of consecutive triangles must be the same. Therefore, with $n$ points, we can form the following number of non-equivalent  sequences of triangles:
$$4^{\lfloor \frac{n}{3} \rfloor-1}\geq 4^{\frac{n}{3}-2}=\left(4^\frac{1}{3}\right)^{n-6}.$$
\end{pf}

Let us now consider the sequence $K_n$. It is easy to verify that $K_n=1, 1, 2, 4$, for $n=1,2,3,4$. For $n=5$, we can also see that $K_5\geq 12$. On the one hand, thanks to Theorem~\ref{PropositionIso}, if the configuration $X$ is complete, $\Lat(X)\cong \RLat(X)$. In particular, the complete configurations $L_5, T_4, D_{0,2}, D_{1,1}$ are all distinct. As for the configurations $I_{0,3}$ and $I_{1,2}$, they are not complete, but when a suitable point is added, they become complete, giving rise to the non-equivalent configurations $D_{0,3}$ and $D_{1,2}$. Their convex lattices $\Lat(D_{0,3})$ and $\Lat(D_{1,2})$ are not isomorphic, and therefore $\Lat(I_{0,3})$ and $\Lat(I_{1,2})$ are not isomorphic to any of the others.

Next, consider the configurations $R,R',G,G',V_5$ and $P_5$, which are neither complete nor generate a finite convex lattice. To distinguish them, we can use invariants. On the one hand, the number $|\Extrem(X)|$ is also an invariant of convex lattices. On the other hand, one can also verify that the number $|X\cap \partial \ch (X)|$ (where $\partial$ denotes the boundary) is also an invariant. Thus, the twelve known configurations yield at least twelve non-isomorphic lattices.

This suggests that for $n\leq 5$, we may have $R_n=K_n$.
However, for $n\geq 6$, the situation changes completely. While $R_n$ is finite, we will show that $K_n=2^{\aleph_0}$.

In order to compute $K_n$, for $n\geq 6$, we need some preliminary lemmas regarding the structure of a very specific configuration.
 Let $V_5=\{o, a, b, a', b'\}$ be the configuration with $o{-}a{-}a'$, $o{-}b{-}b'$, and no other collinear triples. For convenience, we will take $o=(0,0)$, $a'=2a$, and $b'=2b$.
This configuration gives rise to an infinite convex lattice, as we will see later. We need an elementary property of universal algebra. If $F:L \longrightarrow L$ is a lattice automorphism and $L$ has a unique minimal generating set, say $G$, then $G$ is invariant, $F(G)=G$.

\begin{lemma} \label{LemmaPropV5} The configuration $V_5$ satisfies the following properties.
\begin{enumerate}[(i)]
\item There are only two lattice automorphisms of $\Lat(V_5)$: the identity $\id$ and the involution $S$ that satisfies $S(\{o\})=\{o\}$, $S(\{a\})=\{b\}$ and $S(\{a'\})=\{b'\}$, which we will call the symmetry of $V_5$.
\item If $d,d' \in \inte ( \triangle oab)$, then any isomorphism $\Lat(V_5\cup\{d\}) \longrightarrow \Lat(V_5\cup\{d'\})$ must preserve $V_5$ and send the singleton $d$ to $d'$.
\end{enumerate}
\end{lemma}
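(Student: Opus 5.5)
The plan is to exploit the remark preceding the lemma: a lattice automorphism (or isomorphism) maps a unique minimal generating set onto a unique minimal generating set. The first step is therefore to show that in $\Lat(V_5)$, and in $\Lat(V_5\cup\{d\})$ for $d\in\inte(\triangle oab)$, the atoms are the singletons and the minimal generating set is recoverable intrinsically.

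\textbf{Step 1: recovering the points of $V_5$.} Singletons are exactly the atoms, since every nonempty element contains a singleton of an extreme point (Lemma~\ref{LemmaExtreme} applied to the completion). So any isomorphism maps atoms to atoms, and hence induces a bijection between completions $\overline{V_5}$. Among the atoms, the points of $V_5$ are characterized as the \emph{join-irreducible} singletons in the following sense: $\{x\}$ is not below any join of atoms not containing it. Equivalently, $x$ is not generated by the other points, i.e. $x\notin \overline{V_5\setminus\{x\}}$.

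For $o,a,a',b,b'$, I will check this directly. Removing any one point leaves four points whose completion does not contain it. For example, $V_5\setminus\{a\}=\{o,a',b,b'\}$ is complete as $T_3$ and does not contain $a$. Removing $o$ leaves the convex quadrilateral $aa'b'b$, whose completion adds only the diagonal intersection, which is different from $o$. Conversely, every other generated point is a meet of joins of points of $V_5$ and lies in the completion of a proper subset, so it is not of this type. It follows that $V_5$ is the unique minimal generating set and is invariant under any isomorphism.

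\textbf{Step 2: automorphisms of $\Lat(V_5)$.} An automorphism $F$ permutes $V_5$ and preserves the relations ``$z\le x\vee y$'' among atoms. Hence it induces an automorphism of $\RLat(V_5)$, that is, a permutation preserving collinear triples together with their middle point. The triples are $o{-}a{-}a'$ and $o{-}b{-}b'$. Therefore $o$, the common endpoint, is fixed; $\{a,b\}$ (the middle points) is preserved; and $\{a',b'\}$ is preserved consistently with the lines. This leaves exactly $\id$ and the swap. Since $V_5$ generates, $F$ is determined by its values on $V_5$.

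It remains to check that the swap extends to an automorphism. The affine map $(a,b)\mapsto(b,a)$ fixing $o$ (possible since $a'=2a$, $b'=2b$) maps $V_5$ onto itself and commutes with $\ch$ and $\cap$. It therefore induces $S$.

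\textbf{Step 3: part (ii).} Here $d$ is an added point in the open triangle, generic with respect to $V_5$. I will show that $V_5\cup\{d\}$ again has a unique minimal generating set, namely itself. The point $d$ is not in $\overline{V_5}$ whenever it is not a generated point. If $d$ happens to be generated, the lattice equals $\Lat(V_5)$; this case needs a separate treatment, using the fact that the isomorphism is determined via Step 2.

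So an isomorphism $\Psi$ sends $V_5\cup\{d\}$ onto $V_5\cup\{d'\}$. The main obstacle is to show that $\Psi(d)=d'$, i.e.\ that $d$ is intrinsically distinguishable from the points of $V_5$. For this I would use incidence invariants in the relative lattice of the generators. Since $d$ is interior to $\triangle oab$, we have $d\le o\vee a\vee b$. No point of $V_5$ lies in the triangle spanned by three others, so $d$ is the unique generator lying below a join of three other generators. Hence $\Psi(\{d\})=\{d'\}$, and $\Psi$ restricts to a bijection of $V_5$, which preserves the sublattice it generates, $\Lat(V_5)$.
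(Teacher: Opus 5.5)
Your overall architecture matches the paper's: the unique minimal generating set is preserved, betweenness forces $\id$ or the swap, and the swap is realized by the linear map exchanging $a$ and $b$. Part (i) is essentially fine. Part (ii), however, has a genuine gap in two places.

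\emph{First gap: the case $d\in\overline{V_5}$.} You defer this case to a ``separate treatment'', but none can exist. If distinct $d,d'$ were both generated, then $\Lat(V_5\cup\{d\})=\Lat(V_5)=\Lat(V_5\cup\{d'\})$, and the identity is an isomorphism that does not send $d$ to $d'$. So (ii) depends entirely on the geometric fact the paper invokes, citing the literature: every point of $\overline{V_5}$ other than $o,a',b'$ lies in $\triangle abc'$, hence $\inte(\triangle oab)\cap\overline{V_5}=\emptyset$. You must prove or cite this. An induction on vertices of meets works. Each new vertex is a crossing of two edges with endpoints in $\{o,a',b'\}\cup\triangle abc'$. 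Apart from $o,a',b'$, such crossings stay in $\triangle abc'$, essentially because $\triangle aa'b\cap\triangle abb'=\triangle abc'$ and segments from $o$ to $\triangle abc'$ cross line $ab$ into $\triangle abc'$.

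\emph{Second gap: the invariant singling out $d$ is false.} We have $a\in oa'\subseteq\triangle oa'b$, so $\{a\}\le o\vee a'\vee b$ (indeed $\{a\}\le o\vee a'$), and likewise for $b$. Thus ``lies below a join of three other generators'' holds for $a$, $b$ and $d$ alike. What works, and is what the paper's ``respect the boundary'' amounts to, is this: $d$ is the only generator lying below a join of other generators but below no join of two of them. Indeed $o,a',b'$ are extreme, $a,b$ lie on $oa'$ and $ob'$, and no segment spanned by two points of $V_5$ meets $\inte(\triangle oab)$.

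\emph{A smaller point in Step~1.} The criterion ``$\{x\}$ is not below any join of atoms not containing it'' singles out only the extreme points $o,a',b'$, again because $\{a\}\le\{o\}\vee\{a'\}$. It is also not equivalent to $x\notin\overline{V_5\setminus\{x\}}$, and that condition is not lattice-intrinsic anyway, since it refers to $V_5$. The intrinsic property you want is ``$\{x\}$ is not in the sublattice generated by the other atoms''. For $x=a$ this requires that $a$ is not generated by $\overline{V_5}\setminus\{a\}$, which does not follow from the completeness of $\{o,a',b,b'\}$. One way to get it: $oa'$ is an edge of $\ch(V_5)$, so $C\mapsto C\cap oa'$ is a lattice homomorphism, and the only points of $\overline{V_5}$ on $oa'$ are $o,a,a'$. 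The paper merely asserts uniqueness of the minimal generating set, so this is a shared weak spot, but your stated characterization is wrong as written.
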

\begin{pf} (i) Every lattice automorphism transforms generating sets into generating sets. One can verify that $V_5$ is the unique minimal generating set of $\Lat(V_5)$. This means that any isomorphism will transform $V_5$ into $V_5$. Consider the map $f:V_5 \longrightarrow V_5$ given by $f(x)=y$ if and only if $F(\{x\})=\{y\}$.

Let us see that $f$ can only take two forms: either $f$ is the identity or $f$ exchanges $a$ for $b$ and $a'$ for $b'$. Given that $o{-}a{-}a'$, in order for $f$ to extend to a morphism, it must satisfy that $f(o){-}f(a){-}f(a')$. On the other hand $o$, $a'$ and $b'$ are extreme points, and therefore $f(o)$, $f(a')$ and $f(b')$ must also be extreme. Finally, $a'$ and $b'$ have no intermediate point in $\Lat(V_5)$, and therefore the same must hold for the points $f(a')$ and $f(b')$. These three observations combined suffice to show that $f$ can only be the identity or the symmetry.

It remains only to see that indeed these two possible $f$ extend to a lattice morphism. As for the identity, this is obvious. As for the second case, there exists an affine map $S:\Real^2\longrightarrow \Real^2$ consisting of a reflection with axis $0c'$, where $c'=ab'\wedge ba'$, and which induces a lattice automorphism.

Let us consider point (ii). Let $F:\Lat(V_5\cup\{d\}) \longrightarrow \Lat(V_5\cup\{d'\})$ be an isomorphism. 

All new points generated from the configuration $V_5$ by the lattice are inside the triangle $\triangle abc'$, where $c'=ab'\wedge ba'$. In \cite{Grune2008}, it is shown that, in addition, the set of points is dense in that triangle, but there is no new point outside it. 
Thus, $\inte(\triangle oab)\cap \overline{V_5}=\emptyset$, and therefore, the point $d$ cannot be generated from $V_5$. Hence $V_5\cup\{d\}$ is the unique minimal generating set of $\Lat(V_5 \cup\{d\})$ and the same holds for $\Lat(V_5 \cup \{d'\})$. Since $F$ must preserve the minimal generating sets that are unique, $F$ sends the points of $V_5\cup\{d\}$ to the points of $V_5\cup\{d'\}$. Furthermore, $F$ must respect the boundary; the only possibility remaining is that $F(\{d\})=\{d'\}$.
\end{pf}

To be able to differentiate non-isomorphic convex lattices, we will use the following construction. Consider the points
$$c'=ab'\wedge ba' \quad \mbox{ and } \quad c=oc' \wedge ab.$$
The points $\{o,c,c',b,b'\}$ again form a configuration that is equivalent to $V_5$. And likewise the points $\{0,a,a',c,c'\}$, see Figure~\ref{FigContinuum}. Thus, for each of these configurations, we could repeat the construction and find new configurations, all equivalent to $V_5$.

We are interested in the triangles that are formed by means of the vertex $o$ and the vertices that appear in the segment $ab$ as a result of iterating the previous construction. But to refer to them, we need a systematic encoding. We will use only the elementary notation on strings and formal languages, see for example \cite{Hopcroft1979}.
Let $\Sigma=\{0,1\}$ be an alphabet with two letters. $\Sigma^*$ denotes the set of strings (or words) over the alphabet $\Sigma$. $\Sigma^n$ denotes the set of words with length $n$ and $\Sigma^\omega$ denotes the set of words of infinite length. We denote by $\alpha^\omega$ the infinite word $\alpha\alpha\alpha\cdots$. Given a word $\alpha=\alpha_1\cdots\alpha_n \in \Sigma^*$, $\alpha_i \in \Sigma$, we define the \emph{complementary word} as $\widetilde{\alpha}=\widetilde{\alpha_1} \cdots \widetilde{\alpha_n}$ where $\widetilde{0}=1$ and $\widetilde{1}=0$. The complementary word of an infinite-length word is defined by extension.

As noted above the point $c$ subdivides the triangle $\triangle oab$ into two triangles: an upper triangle $\triangle_0$ and a lower triangle $\triangle_1$. We can repeat these subdivisions indefinitely:
\begin{align*}
\triangle oab &=\triangle_0 \cup \triangle_1 \\
&=\triangle_{00} \cup \triangle_{01} \cup \triangle_{10} \cup \triangle_{11} \\
&=\triangle_{000} \cup \triangle_{001} \cup \triangle_{010} \cup \triangle_{011} \cup \triangle_{100} \cup \triangle_{101} \cup \triangle_{110} \cup \triangle_{111}\\
&=\cdots
\end{align*}

%%%%%%%%%%%%%%%%%%%%%%
\begin{figure}
\centering
\begin{overpic}[scale=0.15]{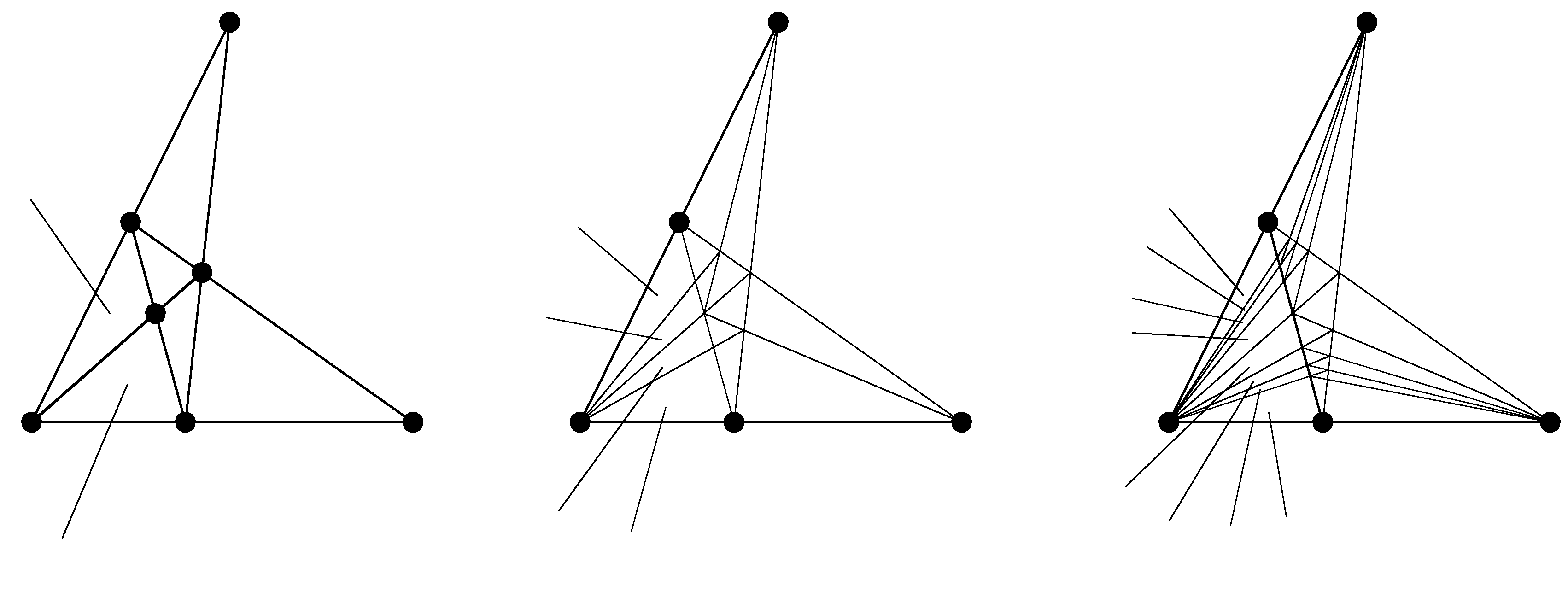}

 \put (0,27) {$\triangle_0$} 
 \put (1,1.5) {$\triangle_1$} 
 
 \put (0, 9) {$o$}
 \put (10, 9) {$a$}
 \put (25.5,9) {$a'$}
 \put (11.5,37) {$b'$}
 \put (6.5,26) {$b$}
 \put (9.8,20.7) {$c$}
 \put (13.5,23) {$c'$}

 \put (32,25.5) {$\triangle_{00}$} 
 \put (29,18) {$\triangle_{01}$} 
 \put (30.5,4.5) {$\triangle_{10}$} 
 \put (37,2) {$\triangle_{11}$} 

 \put (73,26.5) {$\triangle_{000}$} 
 \put (66.5,24) {$\triangle_{001}$} 
 \put (65,20) {$\triangle_{010}$} 
 \put (65.5,16) {$\triangle_{011}$} 
 
 \put (64,7.5) {$\triangle_{100}$} 
 \put (68,3.75) {$\triangle_{101}$} 
 \put (75.5,1.5) {$\triangle_{110}$} 
 \put (82.5,3) {$\triangle_{111}$} 
 
\end{overpic}
\caption{Triangle subdivision of the proof of Theorem~\ref{TheContinuum}.} \label{FigContinuum}
\end{figure}
%%%%%%%%%%%%%%%%%%%%%%

\begin{lemma} \label{LemmaCantor} Let $\alpha=\alpha_1\alpha_2\cdots \in \Sigma^\omega$ be an infinite word. The limit
$$\liminf_{n\to \infty} \triangle_{\alpha_1\cdots \alpha_n}=\bigcap_{n\geq 1} \triangle_{\alpha_1\cdots \alpha_n}=\triangle_\alpha$$
exists, and it is non-empty. In addition, $\triangle_\alpha$ contains at least a segment $oe$ for some point $e \in ab$. 
\end{lemma}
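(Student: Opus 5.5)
The plan is to show that the triangles $\triangle_{\alpha_1\cdots\alpha_n}$ form a decreasing chain of compact convex sets, all sharing the vertex $o$. Their intersection is then non-empty by the finite intersection property, and the $\liminf$ equals the intersection because the sequence is nested. I would first make the recursive construction explicit. Each $\triangle_w$ with $w\in\Sigma^*$ has the form $\triangle o\, p_w q_w$, where $p_w,q_w$ lie on the segment $ab$. The triangle carries an associated copy of $V_5$, namely $\{o,p_w,q_w,p'_w,q'_w\}$, with $o{-}p_w{-}p'_w$ and $o{-}q_w{-}q'_w$.

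The subdivision step works as follows. Set $r'_w=p_wq'_w\wedge q_wp'_w$ and $r_w=o r'_w\wedge p_wq_w$. Then $\triangle_{w0}$ and $\triangle_{w1}$ are the two triangles $\triangle o\,p_w r_w$ and $\triangle o\, r_w q_w$, in the orientation fixed by Figure~\ref{FigContinuum}. Each child again carries a $V_5$ with apex $o$, as observed before the statement. In particular $\triangle_{w\sigma}\subseteq\triangle_w$ for $\sigma\in\Sigma$, and every triangle in the chain contains $o$.

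With this in hand, the intersection of the chain is
$$\triangle_\alpha=\bigcap_n \triangle_{\alpha_1\cdots\alpha_n}=\ch\Bigl(\{o\}\cup \bigcap_n [p_n,q_n]\Bigr),$$
where $[p_n,q_n]=\triangle_{\alpha_1\cdots\alpha_n}\cap ab$ is the base of the $n$-th triangle. The bases form a nested sequence of compact segments on $ab$, so by Cantor's intersection theorem their intersection is a non-empty closed segment $[p_\infty,q_\infty]$, possibly a single point. Choose any $e\in[p_\infty,q_\infty]$. The segment $oe$ lies in every $\triangle_{\alpha_1\cdots\alpha_n}$, because each of these is convex and contains both $o$ and $e$. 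Hence $oe\subseteq\triangle_\alpha$, which gives non-emptiness and the segment claim together.

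The only point that needs care is the displayed equality. Each triangle is the union of the segments from $o$ to points of its base $[p_n,q_n]$. A point $x\neq o$ lies in every triangle exactly when the ray from $o$ through $x$ meets $ab$ inside every base and $x$ lies no farther than $ab$. So the equality reduces to the nesting of the bases, which follows from $r_w\in p_wq_w$. This in turn holds because $r'_w$ lies inside the cone at $o$ spanned by $p_w,q_w$: it is the intersection of the diagonals of the convex quadrilateral $p_wq_wq'_wp'_w$. I expect no serious obstacle; whether the limit segment is degenerate is irrelevant here, since the statement only asks for at least a segment $oe$.
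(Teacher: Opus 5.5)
Your proof is correct and is essentially the paper's argument: the bases $E_{\alpha_1\cdots\alpha_n}$ (your $[p_n,q_n]$) are nested compact segments on $ab$, Cantor's intersection theorem gives a point $e$ common to all of them, and convexity of each triangle containing both $o$ and $e$ gives $oe\subseteq\triangle_\alpha$. Your check that $r_w\in p_wq_w$ (via the diagonals of the convex quadrilateral) and your description $\triangle_\alpha=\ch(\{o\}\cup[p_\infty,q_\infty])$ are extra detail that the paper leaves implicit and does not need for the statement.
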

\begin{pf} Given a triangle $\triangle_{\alpha_1\cdots \alpha_n}$, denote by $E_{\alpha_1\cdots \alpha_n}$ the opposite side to the vertex $o$.  First, we notice that $\triangle_{\alpha_1\cdots \alpha_n} \supseteq \triangle_{\alpha_1\cdots \alpha_n\alpha_{n+1}}$. Then, $E_{\alpha_1\cdots \alpha_n} \supseteq E_{\alpha_1\cdots \alpha_n\alpha_{n+1}}$. The sets $E_{\alpha_1\cdots \alpha_n}$ are compact, and by Cantor's intersection theorem, the limit
$$\liminf_{n\to \infty} E_{\alpha_1\cdots \alpha_n}=\bigcap_{n\geq 1} E_{\alpha_1\cdots \alpha_n}=E_\alpha$$
 exists, and it is non-empty, see for example \cite{Munkres2012}. Thus, there is at least some point $e\in E_\alpha$. Therefore, the limit $\triangle_\alpha$ exists and 
 contains a segment $oe$ for some point $e \in ab$. 
\end{pf}

We still need another technical result. 
In the proof of our final theorem, a crucial step consists of establishing a bijection between triangles and words. This situation is analogous to the fact that some numbers in binary code have two equivalent expressions, for example, $0.1000111\ldots$ and $0.1001000\ldots$. A solution is to fix a normal form. In the case of real numbers, the first encoding is discarded. However, in our  proof, we remove this ambiguity by excluding eventually constant sequences.

We say that two finite length words, $\alpha, \beta \in \Sigma^*$ are \emph{contiguous} if there is a $\gamma \in \Sigma^*$ such that $\alpha=\gamma10^m$ and $\beta=\gamma01^m$ (or conversely), for some integer $m\geq 0$. The definition also extends to infinite-length words as $\alpha=\gamma 10^\omega$, $\beta=\gamma 01^\omega$. We say that two triangles $\triangle_\alpha, \triangle_\beta$ are contiguous if their words are. Intuitively, contiguous words correspond to triangles sharing a boundary segment.

\begin{lemma} Let $\alpha, \beta \in\Sigma^\omega$. If $\triangle_\alpha \cap \triangle_\beta \neq \emptyset$, then either the triangles are the same or contiguous. 
\end{lemma}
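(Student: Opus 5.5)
Every $\triangle_{\alpha_1\cdots\alpha_n}$ is a triangle with apex $o$ whose opposite side $E_{\alpha_1\cdots\alpha_n}$ is a subsegment of $ab$. So the plan is to project everything radially from $o$ onto the segment $ab$ and argue one-dimensionally. For a finite word $\gamma$, the two children $\triangle_{\gamma0}$ and $\triangle_{\gamma1}$ are obtained by splitting $E_\gamma$ at the single interior point produced by the construction (the analogue of $c$ in the copy of $V_5$ attached to $\triangle_\gamma$). Hence $E_{\gamma0}$ and $E_{\gamma1}$ are closed subsegments of $E_\gamma$ meeting in exactly one point. By induction on $n$, the $2^n$ segments $E_\gamma$ with $|\gamma|=n$ tile $ab$ in a fixed linear order with pairwise disjoint interiors. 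We should fix the orientation convention: label $0$ the child adjacent to the endpoint of $E_\gamma$ lying on the $b$ side, matching $\triangle_0$ being the upper triangle. Under this convention, the order of the level-$n$ segments along $ab$ (from $b$ to $a$) is the lexicographic order of the words, so they appear as binary numbers in increasing order.

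Next I would characterize when two level-$n$ segments meet. Since $o\notin ab$, two triangles $\triangle_\gamma$, $\triangle_\delta$ with $|\gamma|=|\delta|=n$ intersect exactly when they share the vertex $o$; their intersection beyond $o$ is nonempty iff $E_\gamma\cap E_\delta\neq\emptyset$, i.e.\ iff they are equal or consecutive in the lexicographic order. Consecutive words of the same length $n$ are exactly those of the form $\gamma01^m$ and $\gamma10^m$ with $|\gamma|+m+1=n$, because incrementing a binary number flips a trailing block $01^m$ into $10^m$. So at each finite level, meeting sides means equal or contiguous in the finite sense.

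Here we must address a subtlety: all triangles contain $o$, so $\triangle_\alpha\cap\triangle_\beta\ni o$ always. The intended reading must be that the sides $E_\alpha\cap E_\beta\neq\emptyset$, equivalently that the triangles intersect away from $o$. I would state this explicitly and prove the statement in that form.

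For the infinite words, suppose $\alpha\neq\beta$ and $E_\alpha\cap E_\beta\neq\emptyset$. Let $k$ be the first index where they differ, with $\alpha_k=1$ and $\beta_k=0$, and write $\gamma=\alpha_1\cdots\alpha_{k-1}$. For every $n\ge k$, the prefixes $\alpha|_n$ and $\beta|_n$ have meeting sides, since $E_\alpha\subseteq E_{\alpha|_n}$ and similarly for $\beta$. By the finite step, these prefixes are contiguous and distinct. Given the common prefix $\gamma$ and the difference at position $k$, this forces $\alpha|_n=\gamma10^{n-k}$ and $\beta|_n=\gamma01^{n-k}$. Since this holds for all $n$, we get $\alpha=\gamma10^\omega$ and $\beta=\gamma01^\omega$, which is contiguous by definition.

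The main obstacle is the first step: rigorously justifying that the splitting point is interior to $E_\gamma$ and that the labelling orientation is consistent at every level. For this I would use that each subdivision is an affine image of the $V_5$ picture, so betweenness along $ab$ is preserved. The orientation bookkeeping, which makes lexicographic order coincide with geometric order, may need an explicit convention added to the construction.
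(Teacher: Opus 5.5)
Your argument is correct, and you are right that the statement cannot be read literally. Every $\triangle_\gamma$, finite or infinite, contains $o$, so the literal statement already fails for $0^\omega$ and $1^\omega$. The paper's proof tacitly uses your reading: it says $\triangle_0$ and $\triangle_1$ ``only meet along a segment''. Likewise, the later claim in Theorem~\ref{TheContinuum} that $\triangle_\alpha\cap\triangle_\beta=\emptyset$ for distinct $\alpha,\beta\in\Gamma$ has to mean $\triangle_\alpha\cap\triangle_\beta=\{o\}$.

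Your route differs in organization from the paper's. The paper inducts on word length by peeling off the first letter. Equal first letters are reduced to shorter words via the self-similar subdivision inside $\triangle_0$ or $\triangle_1$. Different first letters are settled by noting that $\triangle_0\cap\triangle_1=oc$, so the words must be $01^n$ and $10^n$. The infinite case then follows by a limit argument. You instead project radially onto $ab$, show once that the level-$n$ sides tile $ab$ in lexicographic order, and identify contiguity with the binary successor relation $\gamma01^m\mapsto\gamma10^m$.

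The recursion avoids setting up a global order, but it leaves the orientation bookkeeping implicit; your version makes it explicit. More usefully, yours handles the limit cleanly. The paper invokes Cantor's theorem for ``all finite levels meet $\Rightarrow$ the limits meet'', which is the converse of what is needed. It then asserts contiguity ``in the limit'' without controlling the finite-level witnesses. You use only the trivial inclusion $E_\alpha\subseteq E_{\alpha_1\cdots\alpha_n}$. Fixing the first index $k$ where the words differ then forces the witnesses $\alpha_1\cdots\alpha_{k-1}$ and $m=n-k$ at every level $n\ge k$.

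One correction to your ``main obstacle'': the sub-configurations are not affine images of the normalized $V_5$. With $a'=2a$ and $b'=2b$ one gets $c=\frac12(a+b)$ and $c'=\frac23(a+b)=\frac43c$. Affine maps preserve the ratio $2$ along the rays, so none carries $V_5$ onto $\{o,c,c',b,b'\}$.

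You do not need affine images. Take any $\{o,x,x',y,y'\}$ with $o{-}x{-}x'$ and $o{-}y{-}y'$, and $o,x,y$ not collinear. The point $z'=xy'\wedge yx'$ is the crossing of the diagonals of the convex quadrilateral $x,y,y',x'$. It therefore lies strictly inside the angle $xoy$ and strictly beyond the line $xy$. Hence $z=oz'\wedge xy$ is interior to $xy$, $o{-}z{-}z'$ holds, and both children are again of this type. This gives nondegenerate sides at every level, which is exactly what your step ``two level-$n$ sides meet iff equal or consecutive'' requires.
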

\begin{pf} First, we consider the case for finite-length words. Let $\alpha, \beta \in \Sigma^*$, different words with $|\alpha|=|\beta|=n$. We proceed by induction on the length. For $n=1$, the statement is trivial. Suppose that it is true for any word with length less than $n+1$, and let $\alpha, \beta \in \Sigma^*$, with $\alpha\not=\beta$, $|\alpha|=|\beta|=n+1$, and  
 $\alpha=\alpha_1\cdots \alpha_{n+1}$, $\beta=\beta_1\cdots \beta_{n+1}$. 
 
 We consider two cases. If $\alpha_1=\beta_1=0$, then we can assume that we are in the initial subdivision process of the upper triangle, that is, consider the $V_5$ configuration. Thus, if $\triangle_{\alpha_1\cdots \alpha_{n+1}}\cap \triangle_{\beta_1\cdots \beta_{n+1}}\not=\emptyset$, then $\triangle_{\alpha_2\cdots \alpha_{n+1}}\cap \triangle_{\beta_2\cdots \beta_{n+1}}\not=\emptyset$. By the induction hypothesis, $\alpha_2\cdots \alpha_{n+1}$, $\beta_2\cdots \beta_{n+1}$ are contiguous, and therefore $\alpha_1\cdots \alpha_{n+1}$, $\beta_1\cdots \beta_{n+1}$ are contiguous. The argument is symmetric if we suppose $\alpha_1=\beta_1=1$.  
 
Suppose now that $\alpha_1\not= \beta_1$. We have two subcases: $\alpha_1=0$ and $\beta_1=1$, and $\alpha_1=1$,  and $\beta_1=0$. We only examine the first case, since the second one is symmetric. 
The statement holds, since the interiors of $\triangle_0$ and $\triangle_1$ are disjoint and only meet along a segment. If the intersection of the triangles is not empty, the words necessarily continue as $\alpha=01^{n}$ and $\beta=10^{n}$. 

The next step is to extend this result to infinite-length words. Clearly $\triangle_{\alpha_1\cdots \alpha_n} \cap \triangle_{\beta_1\cdots \beta_n} \supseteq \triangle_{\alpha_1\cdots \alpha_{n+1}} \cap \triangle_{\beta_1\cdots \beta_{n+1}}$, and again by Cantor's theorem, the limit set
$$\bigcap_{n\geq 1} \left( \triangle_{\alpha_1\cdots \alpha_n} \cap \triangle_{\beta_1\cdots \beta_n} \right)=\triangle_\alpha \cap \triangle_\beta \not=\emptyset,$$
provided that $\triangle_{\alpha_1\cdots \alpha_n} \cap \triangle_{\beta_1\cdots \beta_n}\not=\emptyset$ for each $n\geq1$.  If $\triangle_{\alpha_1\cdots \alpha_n} \cap \triangle_{\beta_1\cdots \beta_n} \not=\emptyset$, then $\alpha_1\cdots \alpha_n$ and $\beta_1\cdots \beta_n$ are contiguous, and then, in the limit, $\alpha=\alpha_1\alpha_2\cdots$, $\beta_1\beta_2\cdots$ are contiguous. Therefore, if $\triangle_\alpha \cap \triangle_\beta\not=\emptyset$, then $\alpha, \beta$ are contiguous. 
\end{pf}

\begin{theorem} \label{TheContinuum} For $n\geq 6$, $K_n=2^{\aleph_0}$.
\end{theorem}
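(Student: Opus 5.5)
For the upper bound, I would note that every planar $n$-point configuration $X$ generates a countable lattice $\Lat(X)$: it is the union, over $k$, of the finitely many sets obtained by at most $k$ applications of $\vee$ and $\wedge$. Every such lattice is therefore isomorphic to a lattice structure on $\mathbb{N}$ or on a finite set. There are at most $2^{\aleph_0}$ binary operations on $\mathbb{N}$, so there are at most $2^{\aleph_0}$ isomorphism classes, and $K_n\leq 2^{\aleph_0}$.

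The main work is the lower bound. I would first do the case $n=6$ and then handle larger $n$ by padding. For each $d\in\inte(\triangle oab)$, consider $X_d=V_5\cup\{d\}$. By Lemma~\ref{LemmaPropV5}(ii), any isomorphism $\Lat(X_d)\to\Lat(X_{d'})$ preserves $V_5$ and sends $d$ to $d'$. Restricting it to the sublattice generated by $V_5$ gives an automorphism of $\Lat(V_5)$. By Lemma~\ref{LemmaPropV5}(i), this automorphism is $\id$ or the symmetry $S$. The key point is that the isomorphism is determined on all the generated elements, including every triangle $\triangle_\gamma$ with $\gamma\in\Sigma^*$. These triangles lie in $\Lat(V_5)$, since their vertices $o$, $a$, $b$, $c$ and their iterates are lattice points. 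So the isomorphism sends $\triangle_\gamma$ to either $\triangle_\gamma$ or $S(\triangle_\gamma)=\triangle_{\widetilde\gamma}$; the symmetry exchanges $0$ and $1$ in the coding. Membership of $d$ in a triangle is expressed lattice-theoretically by $\{d\}\wedge\triangle_\gamma=\{d\}$, and isomorphisms preserve this. So if $d\in\triangle_\gamma$, then $d'\in\triangle_\gamma$ for all $\gamma$, or $d'\in\triangle_{\widetilde\gamma}$ for all $\gamma$.

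Next I would code the points. For a word $\alpha\in\Sigma^\omega$ that is not eventually constant, choose $d_\alpha\in\inte(\triangle_\alpha)$, or on the segment $oe$ of Lemma~\ref{LemmaCantor} away from $o$ and $e$; this lies inside $\inte(\triangle oab)$. Suppose $X_{d_\alpha}$ and $X_{d_\beta}$ have isomorphic lattices. The previous paragraph gives $d_\beta\in\triangle_{\alpha_1\cdots\alpha_n}$ for all $n$, or $d_\beta\in\triangle_{\widetilde{\alpha}_1\cdots\widetilde{\alpha}_n}$ for all $n$. Hence $\triangle_\beta$ meets $\triangle_\alpha$ or $\triangle_{\widetilde\alpha}$. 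By the last lemma the words are then equal or contiguous. Contiguous infinite words are eventually constant, so that case is excluded. Therefore $\beta\in\{\alpha,\widetilde\alpha\}$. The set of non-eventually-constant words has cardinality $2^{\aleph_0}$, and identifying $\alpha$ with $\widetilde\alpha$ preserves that cardinality, so $K_6\geq 2^{\aleph_0}$.

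For $n>6$, I would add $n-6$ fixed extra points, for instance collinear points on the ray beyond $a'$ from $o$. They should be chosen so that the minimal-generating-set and boundary arguments still force $V_5$ and $d$ to be preserved, and so that no point of $\inte(\triangle oab)$ is generated without $d$. I expect this step to be the main obstacle. Controlling the new points that the extra points generate, and checking that Lemma~\ref{LemmaPropV5}(ii) still applies, requires care. Placing the extra points far away along line $oa'$, so that $V_5$ together with them forms a configuration whose generated points stay outside $\triangle oab$, is what I would try first. A second delicate point is justifying that the membership pattern of $d$ determines the triangles well enough. The word coding handles this, provided $d_\alpha$ is taken in the interior of each $\triangle_{\alpha_1\cdots\alpha_n}$, which holds for non-eventually-constant $\alpha$.
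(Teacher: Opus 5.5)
For $n=6$ your argument is the paper's. Lemma~\ref{LemmaPropV5}(ii) forces $V_5\mapsto V_5$ and $d\mapsto d'$. By Lemma~\ref{LemmaPropV5}(i) the restriction to $\Lat(V_5)$ is $\id$ or $S$, with $S(\triangle_\gamma)=\triangle_{\widetilde\gamma}$. The contiguity lemma then leaves only $\beta\in\{\alpha,\widetilde\alpha\}$; quotienting by $\alpha\sim\widetilde\alpha$ is equivalent to the paper's restriction to words $0\alpha$.

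Several of your details are tighter than the paper's:
\begin{itemize}
\item The limit sets $\triangle_\alpha$ need not be lattice elements. Transporting membership through the finite-level triangles via $\{d\}\wedge\triangle_\gamma=\{d\}$ is exactly what the paper's phrase ``must map $\triangle_\alpha$ to $\triangle_\beta$'' requires.
\item You apply the contiguity lemma at the point $d_\beta\neq o$. That is the only reading under which the lemma is true, since every $\triangle_\gamma$ contains $o$.
\item Excluding all eventually constant words also discards $0^\omega$ and $1^\omega$. These lie in the paper's $\Gamma$ but admit no $z_\alpha\in\inte(\triangle oab)$.
\item You prove $K_n\leq 2^{\aleph_0}$, which the paper never addresses. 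It also follows at once from there being only $|(\Real^2)^n|=2^{\aleph_0}$ configurations.
\end{itemize}

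The genuine gap is $n>6$: you only describe what the padding should achieve. The paper has the same hole, since its proof ends with the bare assertion that the configurations ``can be extended''. So you are not missing an idea the paper supplies, but as written neither text proves the theorem for $n\geq 7$.

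Your suggested padding does work, and the check is short. Put $a_1,\dots,a_k$, $k=n-6$, on the ray from $o$ beyond $a'$, and set $Y=V_5\cup\{a_1,\dots,a_k\}$. Let $H$ be the closed half-plane bounded by the line $ab$ that does not contain $o$; every point of $Y$ except $o$ lies in $H$. As in the proof of Lemma~\ref{LemmaExtreme}, each new point of $\overline{Y}$ is a crossing of two segments joining previously obtained points. Two segments in $H$ cross in $H$. A segment $oh$ with $h\in H$ meets a segment in $H$ inside $H$. Two segments from $o$ meet only at $o$ or overlap. Hence $\overline{Y}\subseteq H\cup\{o\}$, which misses $\inte(\triangle oab)$; for $k=0$ this even replaces the external density citation.

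This is the only geometric input to Lemma~\ref{LemmaPropV5}(ii). So its proof shows verbatim that any isomorphism $F:\Lat(Y\cup\{d\})\to\Lat(Y\cup\{d'\})$ maps $Y\cup\{d\}$ onto $Y\cup\{d'\}$ with $F(\{d\})=\{d'\}$. Now $d$ is collinear with no two points of $Y$. Hence the relation $\{x\}\leq\{y\}\vee\{z\}$, which $F$ preserves, lives only on the chain $o{-}a{-}a'{-}a_1{-}\cdots{-}a_k$ of $k+3\geq 4$ points and on $o{-}b{-}b'$. Therefore $F$ fixes $o$, and by preserving betweenness along both chains it fixes every point of $V_5$. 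So $F$ restricts to $\id$ on $\Lat(V_5)$, the symmetry $S$ disappears, and your membership argument gives $\alpha=\beta$ directly.
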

\begin{pf}  
Consider the following set of words,
$$\Gamma=\Sigma^\omega \setminus \left(\Sigma^*01^\omega \cup \Sigma^*10^\omega \right).$$
The set $\Gamma$ contains no contiguous words, and therefore, $\triangle_\alpha\cap \triangle_\beta=\emptyset$, for any $\alpha, \beta \in \Gamma$ with $\alpha\not=\beta$. In particular $\triangle_\alpha=\triangle_\beta$ iff $\alpha=\beta$.
Thus, the map $\alpha \mapsto \triangle_\alpha$ defines a bijection between $\Gamma$ and the set $\{\triangle_\alpha \mid \alpha \in \Gamma\}$.
 Since $\Sigma^*01^\omega \cup \Sigma^*10^\omega$ is countable and $\Sigma^\omega$ has cardinality $2^{\aleph_0}$, it follows that $|\Gamma|=2^{\aleph_0}$.

Let $\alpha, \beta \in \Gamma$ and consider the points $z_\alpha \in \inte (\triangle oab) \cap \triangle_\alpha$ and $z_\beta \in \inte (\triangle oab) \cap \triangle_\beta$. These points exist by Lemma~\ref{LemmaCantor}. By Lemma~\ref{LemmaPropV5}(ii), any isomorphism $\Lat( V_5 \cup \{z_\alpha\}) \longrightarrow \Lat(V_5 \cup \{z_\beta\})$ must send $z_\alpha$ to $z_\beta$, and hence must map the corresponding triangle $\triangle_\alpha$ to $\triangle_\beta$. However, if we restrict the isomorphism to $\Lat(V_5)$ we obtain an automorphism. 
By Lemma~\ref{LemmaPropV5}(i), $\Lat(V_5)$ has only two automorphisms: $\id$ and $S$. It is easy to see that for any triangle $\triangle_\alpha$, with $\alpha\in \Sigma^*$, $\id(\triangle_\alpha) =\triangle_\alpha$ and $S(\triangle_\alpha) =\triangle_{\widetilde{\alpha}}$. Moreover, this also holds for limit triangles, $\triangle_\alpha$ where $\alpha$ is a word of infinite length.

Finally, let us pick the following words $0\alpha$ and $0\beta$, with $\alpha\not=\beta$, $\alpha, \beta \in \Gamma$, that is, $z_{0\alpha}, z_{0\beta}$ are both in the triangle $\triangle_0$. Then, lattices $\Lat(V_5 \cup \{z_{0\alpha}\})$ and $\Lat(V_5 \cup \{z_{0\beta}\})$ cannot be isomorphic. If $F$ is an isomorphism, it cannot be the identity because the points are different, and it cannot be $S$ because 
$0\alpha\not=\widetilde{0\beta}=1\widetilde{\beta}$. Since the cardinality of $\{ 0\alpha \mid \alpha \in \Gamma\}$ is $2^{\aleph_0}$, we have a continuum of essentially different lattices. 

For $n \geq 6$, these configurations can be extended to $n$-point configurations while preserving non-isomorphism.
\end{pf}

A natural question is how these results extend to other closure systems. The most immediate analogue is the affine hull operator $\aff(X)$ of a set of points. This operator also induces a lattice $\Lat_{\mathrm{aff}}(X)$, namely the smallest lattice of affine subspaces containing $X$. However, several phenomena observed in the convex setting no longer hold. For instance, while the points of $\Lat(V_5)$ are dense only in the triangle $\triangle abc'$, in the affine case the points generated by the lattice operations are dense in the entire plane; see \cite{Ismalescu2004}. Moreover, the automorphism group of $\Lat(V_5)$ is $\mathbb{Z}_2$, which makes the structure easy to control, whereas the automorphism group of $\Lat_{\mathrm{aff}}(V_5)$ appears to be much more complex. It remains an open problem to determine whether an analogue of Theorem~\ref{TheContinuum} holds in the affine setting, or whether fundamentally different phenomena arise.

%%%%%%%%%%%%%%%%%%%%%%%%%%%%%%%%%%%%%%%%%%%%%%

%% The Appendices part is started with the command \appendix;
%% appendix sections are then done as normal sections
%% \appendix

%\section{}\label{}

% To print the credit authorship contribution details
%\printcredits

%% Loading bibliography style file
%\bibliographystyle{model1-num-names}
\bibliographystyle{cas-model2-names}

% Loading bibliography database
\bibliography{ConvexBib}

% Biography
%\bio{}
% Here goes the biography details.
%\endbio

%\bio{pic1}
% Here goes the biography details.
%\endbio

\end{document}